\newcommand{\OP}[1]{\mathrm{#1}}
\newcommand{\CC}{\mathbb{C}}
\newcommand{\RR}{\mathbb{R}}
\newcommand{\PP}{\mathbb{P}}
\newcommand{\TT}{\mathbb{T}}
\newcommand{\YY}{\mathbb{Y}}
\newcommand{\ZZ}{\mathbb{Z}}
\newcommand{\LL}{\mathcal{L}}
\renewcommand{\SS}{\mathbb{S}}
\newcommand{\cp}{\mathbb{CP}}
\newcommand{\rp}{\mathbb{RP}}
\newcommand{\transv}{\mathrel{\text{\tpitchfork}}}
\newcommand{\chek}{\textnormal{\selectlanguage{russian}Ч}}
\newcommand{\tpitchfork}{%
 \vbox{
  \baselineskip\z@skip
  \lineskip-.52ex
  \lineskiplimit\maxdimen
  \m@th
  \ialign{##\crcr\hidewidth\smash{$-$}\hidewidth\crcr$\pitchfork$\crcr}
 }%
}
\newcommand{\figlet}[3]{
 \begin{figure}[htb]
  \begin{center}
   \begin{tikzpicture}
    #3
   \end{tikzpicture}
   \caption{#1}
   \label{#2}
  \end{center}
 \end{figure}
}
\numberwithin{equation}{section}
\newtheorem{Theorem}{Theorem}[section]
\newtheorem{Corollary}[Theorem]{Corollary}
\newtheorem{Lemma}[Theorem]{Lemma}
\newtheorem{Conjecture}[Theorem]{Conjecture}
 { \theoremstyle{definition}
\newtheorem{Definition}[Theorem]{Definition}
\newtheorem{Definitions}[Theorem]{Definitions}

\newtheorem{Remark}[Theorem]{Remark}
\newtheorem{Question}[Theorem]{Question}
\newtheorem{Questions}[Theorem]{Questions}
}
\begin{document}

\allowdisplaybreaks

\newcommand{\arXivNumber}{2408.14883}

\renewcommand{\PaperNumber}{109}

\FirstPageHeading

\ShortArticleName{Lagrangian Surplusection Phenomena}

\ArticleName{Lagrangian Surplusection Phenomena}

\Author{Georgios DIMITROGLOU RIZELL~$^{\rm a}$ and Jonathan David EVANS~$^{\rm b}$}

\AuthorNameForHeading{G.~Dimitroglou Rizell and J.D.~Evans}

\Address{$^{\rm a)}$~Department of Mathematics, Uppsala Universitet, Uppsala, Sweden}
\EmailD{\href{mailto:georgios.dimitroglou@math.uu.se}{georgios.dimitroglou@math.uu.se}}
\URLaddressD{\url{https://www.uu.se/en/contact-and-organisation/staff?query=N7-1534}}

\Address{$^{\rm b)}$~Department of Mathematics and Statistics, Lancaster University, Bailrigg, UK}
\EmailD{\href{mailto:j.d.evans@lancaster.ac.uk}{j.d.evans@lancaster.ac.uk}}
\URLaddressD{\url{https://jde27.uk}}

\ArticleDates{Received September 03, 2024, in final form November 23, 2024; Published online December 06, 2024}

\Abstract{Suppose you have a family of Lagrangian submanifolds $L_t$ and an auxiliary Lagrangian $K$. Suppose that $K$ intersects some of the $L_t$ more than the minimal number of times. Can you eliminate surplus intersection (surplusection) with all fibres by performing a Hamiltonian isotopy of $K$? Or will any Lagrangian isotopic to $K$ surplusect some of the fibres? We argue that in several important situations, surplusection cannot be eliminated, and that a better understanding of surplusection phenomena (better bounds and a clearer understanding of how the surplusection is distributed in the family) would help to tackle some outstanding problems in different areas, including Oh's conjecture on the volume-minimising property of the Clifford torus and the concurrent normals conjecture in convex geometry. We pose many open questions.}

\Keywords{symplectic geometry; Lagrangian intersections; Floer theory}

\Classification{53D12; 53D40}

\section{Introduction}

In this paper, we draw attention to a class of Lagrangian
intersection problems which we believe deserve further study. We
will start by outlining the general class of problems, and then
give some specific examples where we either know or suspect that
the answer is interesting.

\begin{Definitions} Let $(X,\omega)$ be a symplectic
manifold. Given Lagrangian submanifolds~$K$ and~$L$ in~$X$, define
their {\em geometric intersection number} to be the smallest
number of (transverse) intersections that can be achieved
between $L$ and a Lagrangian Hamiltonian isotopic to $K$:
\[i(K,L)=\min\{\# (\phi(K)\transv L)\mid
  \phi\in\OP{Ham}(X,\omega)\}.\] We say that $K$ and $L$ {\em
  surplusect} if $\#(K\cap L) > i(K,L)$. Given a family
$\LL = \{L_t\mid t\in T\}$ of Lagrangian submanifolds
(parametrised by a measure space $T$ with measure $\mu$) and an
auxiliary Lagrangian $K$, we define the {\em surplusection
  locus} to~be\looseness=-1
\[\SS_\LL(K) = \{t\in T\mid \#(K\cap L_t) > i(K,L_t)\} \subseteq
 T\] and the {\em mean surplusection} to be the {\samepage
integral \[\frac{1}{\int_T {\rm d}\mu}\int_T (\#(K\cap L_t) - i(K,L_t)) {\rm d}\mu,\]
assuming it is well defined.}
\end{Definitions}

In Figure \ref{fig:surplusection}, we see an example of a
Lagrangian submanifold $K$ which can be straightened by a
Hamiltonian isotopy to remove all of its surplusection with the
vertical Lagrangians. Our main thesis is that, in many
situations of interest, you cannot achieve this, and
surplusection is forced upon you no matter how you isotope
$K$.

\figlet{(1) The Lagrangian $K$ surplusects some of the vertical
 fibres, (2) but this surplusection can be removed by an isotopy of
$K$.}{fig:surplusection}{\draw[very thick] (-1,0) to[out=0,in=90] (2,0.5)
to[out=-90,in=0] (1,0) to[out=180,in=90] (0,-0.5)
to[out=-90,in=180] (3,0);
\foreach \x in {-0.8,-0.3,0.2,0.7,1.2,1.7,2.2,2.7}{
 \draw (\x,-1) -- (\x,1);
}
\node at (-1,0) [left] {$K$};
\begin{scope}[shift={(6,0)}]
 \draw[very thick] (-1,0) -- (3,0);
\foreach \x in {-0.8,-0.3,0.2,0.7,1.2,1.7,2.2,2.7}{
 \draw (\x,-1) -- (\x,1);
}
\node at (-1,0) [left] {$\phi(K)$};
\end{scope}
}

\begin{Questions}\quad
\begin{enumerate}\itemsep=0pt
\item[(A)] Is there a $\phi\in\OP{Ham}(X,\omega)$ for which
 $\SS_\LL(\phi(K))$ is empty? In other words, can we
 arrange for $\phi(K)$ to simultaneously intersect all
 $L_t$ in the family minimally?
\item[(B)] If not, what is
 $\inf\{\mu(\SS_\LL(\phi(K)))\mid \phi\in\OP{Ham}(X,\omega)\}$? Or can we bound from below
 the mean surplusection?
\end{enumerate}
\end{Questions}

In some cases, the quantity $i(K,L)$ might be hard to compute,
in which case one could replace it by an Ersatz which bounds
$i(K,L)$ from below (like the rank of Lagrangian Floer
cohomology, assuming it is defined between $K$ and $L$). In the
rest of this note, we outline some of the situations in which
these very general questions arise naturally.

\section{Volume bounds}

\subsection{Crofton formula}
L\^{e} \cite{HVLe} proved the following remarkable
formula\footnote{It is named the Crofton formula after an
 analogous result in integral geometry of Euclidean space. Oh
 \cite[p.~503]{Oh90} mentions the existence of such a formula
 for Lagrangian submanifolds of $\cp^n$, attributing it to
 Kleiner. The proof is all the more remarkable for its
 simplicity: we sketch a proof following \cite[Section~1.1]{Rota}. Observe that both sides of the equation are
 additive under taking disjoint union, e.g.,
 $\OP{vol}(K_1\coprod K_2)=\OP{vol}(K_1)+\OP{vol}(K_2)$. Since one can approximate
 any Lagrangian arbitrarily closely by something which is
 piecewise linear, made of tiny patches of totally geodesic
 $\rp^n$s, it is sufficient to prove the formula for totally
 geodesic $\rp^n$s. Since these are all related by
 isometries, this just amounts to fixing the constant
 $\xi_n$.} for Lagrangian submanifolds of $\cp^n$:
\begin{equation}\label{eq:crofton}
\OP{vol}(K) = \xi_n \int_{g\in
  {\rm PU}(n+1)} \#(K\cap gL){\rm d}\mu,
  \end{equation}
where $\OP{vol}(K)$ is the unsigned Riemannian volume of
$K$, $\xi_n$ is a constant depending only on~$n$,~${\rm PU}(n+1)$ is the group of projective unitary isometries of
$\cp^n$, $L$ is the standard ${\rp^n\subset\cp^n}$, and
$\mu$ is the Haar measure on ${\rm PU}(n+1)$. We can compute the
number $\xi_n$ by picking ${K=\rp^n}$: since\footnote{Oh \cite{OhTight}
 calls this property {\em global tightness}, and showed that
 the only globally tight Lagrangian submanifolds of $\CC\PP^n$ are the
 standard $\RR\PP^n$s.}
$\#(\rp^n\cap g\rp^n) = n+1$ for all but a measure-zero set of~${g\in {\rm PU}(n+1)}$, we get $\xi_n = \OP{vol}(\rp^n)/(n+1)$.
We will normalise the Fubini--Study metric by assuming that $\OP{vol}(\rp^n)$ is half the surface area of a unit Euclidean
$n$-sphere, that is,
\begin{align}\label{eq:xi_n}
 \OP{vol}(\rp^n)=\frac{\pi^{\frac{n+1}{2}}}{\Gamma((n+1)/2)},
 \qquad \xi_n = \frac{\pi^{\frac{n+1}{2}}}{(n+1)\Gamma((n+1)/2)},
\end{align}
where $\Gamma$ is the Gamma function. So, for an arbitrary
Lagrangian $K$ for which $i(K,\RR\PP^n)$ is known, the
problem of bounding from below the mean surplusection of
$\phi(K)$ and $\{g\rp^n\mid g\in {\rm PU}(n+1)\}$ is equivalent to
finding a lower bound on $\OP{vol}(\phi(K))$ (assuming we know
$i(K,L)$). Bounding the volume of $\phi(K)$ from below is a
notoriously thorny problem in general: Oh {\cite[p.\ 192]{Oh93}}
conjectured that the monotone Clifford torus minimises volume
amongst Lagrangians in its Hamiltonian isotopy class, but this
conjecture remains open thirty years later.

\begin{Definition} Let $\mu\colon\cp^n\to \RR^n$ be the
moment map for the standard torus action, normalised so that its
image is the simplex with its vertices at the origin and the
standard basis vectors. The monotone Clifford torus is the
Lagrangian torus
\smash{$\TT^n=\mu^{-1}\bigl(\tfrac{1}{n+1},\ldots,\tfrac{1}{n+1}\bigr)$}. It
is the unique nondisplaceable fibre of the moment map. We say
that a Lagrangian torus is {\em Clifford-type} if it has the
form $K=\phi(\TT^n)$ for some Hamiltonian symplectomorphism
$\phi$.
\end{Definition}

\subsection{The bound of Alston and Amorim}\label{AA}

Alston
\cite{Alston} and Alston--Amorim \cite{AlstonAmorim} used the
Crofton formula \eqref{eq:crofton} to give lower bounds on the
volume of a Clifford-type torus $\phi(\TT^n)$ by showing
that\footnote{If $n$ is odd, then this bound is established by
 calculating (characteristic~2) Floer cohomology between
 $\rp^n$ and (a suitable local system on) $K$; if $n$ is
 even, then all such Floer cohomology groups vanish, and the
 bound on geometric intersection is proved instead using the
 Abreu--Macarini trick \cite{AbreuMacarini}, that is, by
 computing Floer cohomology between $\rp^n\times\rp^n$ and
 $K\times K$ in $\cp^n\times\cp^n$.}
\begin{equation}\label{eq:aa_geom_int}i(\TT^n,\rp^n) \geq 2^{\lceil
  n/2 \rceil}.\end{equation}
The resulting volume bound is
\begin{equation}\label{eq:aa}
 \OP{vol}(\TT^n)\geq \frac{\pi^{\frac{n+1}{2}}2^{\lceil
   \frac{n}{2}\rceil}}{(n+1)\Gamma((n+1)/2)}.
\end{equation}
If $n=1$, then the lower bound from \eqref{eq:aa} is sharp
because it equals $\OP{vol}\bigl(\TT^1\bigr)=\pi$. But if $n\geq 2$,
the bound is substantially lower than
\begin{equation}\label{eq:volt}
 \OP{vol}(\TT^n)=\frac{(2\pi)^n}{(n+1)^{\frac{n+1}{2}}},
\end{equation}
as you can see from Table \ref{tab:bounds} below. Since
$\TT^n$ is conjectured to minimise volume in its Hamiltonian
isotopy class, this suggests that there should be lots of
surplusection.

\bgroup
\def\arraystretch{1.2}
\begin{table}
 \begin{center}
  \begin{tabular}{p{0.5cm}>{\raggedleft\arraybackslash}p{2.9cm}>{\raggedleft\arraybackslash}p{2.5cm}>{\raggedleft\arraybackslash}
   p{2.5cm}}
   $n$ & Alston--Amorim & Goldstein & $\OP{vol}(\TT^n)$\\
   \hline
   $1$ & $3.14159\dots$ & $3.14159\dots$ & $3.14159\dots$\\
   $2$ & $4.18879\dots$ & $7.25519\dots$ & $7.59762\dots$\\
   $3$ & $9.86960\dots$ & $13.95772\dots$ & $15.50313\dots$\\
   $4$ & $10.52757\dots$ & $23.54038\dots$ & $27.88010\dots$\\
   $5$ & $20.67085\dots$ & $35.80296\dots$ & $45.33624\dots$\\
   $6$ & $18.89906\dots$ & $64.93939\dots$ & $67.80099\dots$
  \end{tabular}
  \caption{Lower bounds on $\OP{vol}(\phi(\TT^n))$ for
   $\phi\in\OP{Ham}(\cp^n)$, due to Alston--Amorim and
   Goldstein, com\-pared with the volume of the standard
   Clifford torus (final column).}
  \label{tab:bounds}
 \end{center}
\end{table}
\egroup

\begin{Remark} In fact, as we will see in a moment, there
exist much stronger bounds (due to Goldstein) on the volume of a
Clifford-type torus (also shown in Table~\ref{tab:bounds}). However, the bound~\eqref{eq:aa_geom_int} on
$i(K,\rp^n)$ is actually optimal: the lower bound of
$2^{\lceil n/2\rceil}$ intersections is realised by
intersecting $\TT^n$ with a specific $\rp^n$. Namely, let
$L$ be the fixed point set of the antisymplectic involution
\[[z_0:z_1:\dots :z_{n-1}:z_n] \mapsto
 \begin{cases}
  [\bar{z}_1:\bar{z}_0:\dots :\bar{z}_n:\bar{z}_{n-1}]&\mbox{if }n\mbox{ is odd},\\
  [\bar{z}_1:\bar{z}_0:\dots :\bar{z}_{n-1}:\bar{z}_{n-2}:\bar{z}_n]&\mbox{if }n\mbox{ is even}.
 \end{cases}\] This intersects $\TT^n$ in a subtorus of
dimension $\lceil n/2\rceil$, so if $g\in {\rm PU}(n+1)$ is a
generic element sufficiently close to the identity then
$\TT^n\cap gL$ consists of $2^{\lceil n/2\rceil}$ transverse
intersections.
\end{Remark}

\subsection{Goldstein's bound}\label{sct:goldstein}
In fact, Goldstein \cite{Goldstein03} gave a considerably better
bound by exploiting the following identity from integral
geometry:
\[%\label{eq:howard}
 \OP{vol}(K)\cdot\OP{vol}(L) = \zeta_n\int_{{\rm PU}(n+1)}\#(K\cap
 gL) {\rm d}g,
\]
where $K$ and $L$ are Lagrangian submanifolds of $\cp^n$
and $\zeta_n$ is a constant (depending only on~$n$) to be
determined shortly. This identity is attributed to Howard
\cite{Howard}.\footnote{We can see how to prove it as follows:
 fix $K$ and treat both sides as functionals of $L$,
 additive over concatenation. Just as with Crofton's formula,
 the identity then holds with a constant $\zeta_K$
 potentially depending on $K$. But since both
 $\OP{vol}(K)\cdot\OP{vol}(L)$ and the integral are symmetric in
 $K$ and $L$, we get the same constant $\zeta_K=\zeta_L$,
 and so the constant depends only on the choice of ambient
 space.} To determine the constant, Goldstein applies this
inequality in the case $K=L=\rp^n$
\[
\OP{vol}(\rp^n)^2 = \zeta_n(n+1),\qquad\mbox{so}\qquad
 \zeta_n=\frac{\pi^{n+1}}{(n+1)\left(\Gamma((n+1)/2)\right)^2}.
 \]
Now he applies it when $K=L=\phi(\TT^n)$ is a Clifford-type
torus. Since $HF(\phi(\TT^n),\phi(\TT^n))$ has rank $2^n$,
this gives the bound
\[\OP{vol(\phi(\TT^n))}\geq\frac{1}{\Gamma((n+1)/2)}
 \sqrt{\frac{2^n\pi^{n+1}}{n+1}}\] (tabulated for small $n$
in Table \ref{tab:bounds}).

\begin{Remark}
  The fact that the Goldstein bound is stronger than the
Alston--Amorim bound means that {\em surplusection must occur}
between $\TT^n$ and $\{g\rp^n\mid g\in {\rm PSU}(n+1)\}$. But it
seems like an~interesting problem to determine how this
surplusection is distributed over ${\rm PSU}(n+1)$: the volume bound
only guarantees that the {\em mean} surplusection is large, but
that could mean a very small surplusection locus with very high
surplusection. We take a moment, therefore, to study the
distribution of $\#\bigl(\TT^2\cap g\rp^2\bigr)$ for $g\in {\rm PSU}(3)$ where
$\TT^2$ is the Clifford torus in its standard (conjecturally
minimal) position. This is the smallest interesting case, and
the only one we have solved completely.
\end{Remark}

\subsection{Crofton distribution for the Clifford torus}

A point $(x,y,z)\in\CC^3$ represents a point $[x:y:z]$
on the Clifford torus if and only if
$|x|^2=|y|^2=|z|^2$. Given a matrix $g\in {\rm SU}(3)$, a point
$(x,y,z)\in\CC^3$ represents a point $[x:y:z]\in g\rp^2$ if
and only if there exists a unit complex number $u$ such that
$(ux,uy,uz)$ is contained in the real span of the columns of
$g$. Therefore, the intersection
$\TT^2\cap g\rp^2$ consists of points $[x:y:z]$ with~${(x,y,z)}$ in the real span of the columns of $g$ and which
satisfy $|x|^2=|y|^2=|z|^2$. Write~$g_{ij}$ for the entries
of $g$ and let $a=(a_1,a_2,a_3)\in\RR^3$. Let
$(x_1,x_2,x_3)=ga^{\mathsf{T}}$ be the real linear combination of the
columns of $g$ with coefficients given by $a_1$, $a_2$, $a_3$. We
have \[|x_i|^2=\sum_{j,k}g_{ij}\bar{g}_{ik}a_ja_k.\]
The intersection $\TT^2\cap g\rp^2$ is therefore in
bijection with the subvariety of $\rp^2$ given by the
intersection of the two conics
\begin{align*}
 \sum_{j,k}(g_{2j}\bar{g}_{2k}-g_{1j}\bar{g}_{1k})a_ja_k=0,\qquad
 \sum_{j,k}(g_{3j}\bar{g}_{3k}-g_{1j}\bar{g}_{1k})a_ja_k=0.
\end{align*}
In particular, this gives an upper bound of four intersection
points (by B\'{e}zout's theorem). The lower bound of two
intersections comes from Floer cohomology, and there must always
be an even number of intersections if the intersection is
transverse, so the only possibilities for
$\#\bigl(\TT^2\cap g\rp^2\bigr)$ if the intersection is transverse
are $2$ and $4$.

\begin{Corollary} The probability that a randomly
 chosen $\rp^2$ intersects the standard Clifford torus at
 $2$ $($respectively $4)$ points is
 \smash{$2-\frac{\pi}{\sqrt{3}}\approx 0.186$} $\bigl($respectively
 \smash{$\frac{\pi}{\sqrt{3}}-1\approx 0.814\bigr)$}.
\end{Corollary}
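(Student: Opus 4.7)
The plan is to compute the mean intersection number $\bar N := \int_{{\rm PU}(3)} \#(\TT^2 \cap g\rp^2) \, {\rm d}\mu$ via the Crofton formula \eqref{eq:crofton}, and then combine this with the observation from the preceding paragraph that, for a transverse intersection, $\#(\TT^2 \cap g\rp^2) \in \{2,4\}$. Writing $p_2$ and $p_4$ for the respective probabilities, one has the linear system
\begin{equation*}
p_2 + p_4 = 1, \qquad 2 p_2 + 4 p_4 = \bar N,
\end{equation*}
whose solution is $p_2 = 2 - \bar N/2$ and $p_4 = \bar N/2 - 1$. The target numbers $2 - \pi/\sqrt{3}$ and $\pi/\sqrt{3} - 1$ correspond to $\bar N = 2\pi/\sqrt{3}$, so it suffices to verify this value.

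For the computation, I would apply \eqref{eq:crofton} with $n=2$, $K=\TT^2$, $L=\rp^2$, which gives $\bar N = \OP{vol}(\TT^2)/\xi_2$. Using the formula \eqref{eq:volt} for $n=2$, $\OP{vol}(\TT^2) = (2\pi)^2/3^{3/2} = 4\pi^2/(3\sqrt{3})$; using \eqref{eq:xi_n} together with $\Gamma(3/2)=\sqrt\pi/2$, $\xi_2 = \pi^{3/2}/(3\Gamma(3/2)) = 2\pi/3$. Dividing these two quantities indeed yields $\bar N = 2\pi/\sqrt{3}$.

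The routine but necessary checks are as follows. First, I would confirm that the Haar measure $\mu$ in \eqref{eq:crofton} is normalised as a probability measure; this is consistent with the derivation in the paper, since setting $K=L=\rp^n$ gives $\OP{vol}(\rp^n) = \xi_n(n+1)\mu({\rm PU}(n+1))$, and the formula $\xi_n = \OP{vol}(\rp^n)/(n+1)$ quoted after \eqref{eq:crofton} implicitly requires $\mu({\rm PU}(n+1))=1$. Second, I would verify that the set of $g\in{\rm PU}(3)$ for which $\TT^2\cap g\rp^2$ fails to be transverse has Haar measure zero, so that $\bar N$ really equals $2 p_2 + 4 p_4$; this follows from Sard's theorem applied to the smooth action of ${\rm PU}(3)$ on~$\cp^2$ (the non-transverse locus is a proper closed semi-algebraic subset). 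Given these two checks, no further obstacle remains: there is nothing hard in this argument beyond recognising that the transversality dichotomy ``2 or 4 points'' plus one real number (the Crofton mean) is exactly enough data to pin down the distribution.
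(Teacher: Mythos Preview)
Your proposal is correct and follows essentially the same approach as the paper: compute the expected number of intersections via the Crofton formula, combine it with $p_2+p_4=1$ and the dichotomy $\#(\TT^2\cap g\rp^2)\in\{2,4\}$, and solve the resulting linear system. The paper's own proof is the same argument, with Sard's theorem invoked for the measure-zero claim just as you do.
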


\begin{proof}
 The volume of the standard Clifford torus is
 \smash{$\frac{4\pi^2}{3\sqrt{3}}$} and the volume of $\rp^2$ is
 $2\pi$. The expected number of intersections is given by the
 Crofton formula
 \[
 \frac{4\pi^2}{3\sqrt{3}} = \frac{2\pi}{3}(2p_2+4p_4),
 \]
 where $p_i$ denotes the probability of $i$
 intersections. We also know that $p_2+p_4=1$ because the
 probability of a non-transverse intersection is zero by Sard's
 theorem and the only possible numbers for transverse
 intersections are $2$ or $4$. This gives a pair of
 simultaneous equations for~$p_2$ and $p_4$ which have the
 solutions as stated.
\end{proof}

\subsection{Questions}

The analogous problem in higher dimensions
can also be recast as an intersection of real quadrics. Just
knowing the volume and the fact that the probabilities sum to
$1$ is no longer enough to determine all the probabilities
(for example, for $\cp^3$ we need to determine~$p_4$,~$p_6$
and~$p_8$).

\begin{Question} Fix $n>3$. Let $p_{2i}$ be the
probability that the standard Clifford torus
$\TT^n\subset\CC\PP^n$ intersects $g\RR\PP^n$ transversely
in $2i$ points (where $g\in {\rm PSU}(n+1)$ and we use the Haar
measure on ${\rm PSU}(n+1)$). Determine the probability distribution
$p_{2^{\lceil n/2\rceil}},\ldots,p_{2^n}$ for each $n$.
\end{Question}

\begin{Remark}
  Equations \eqref{eq:crofton}, \eqref{eq:xi_n} and
\eqref{eq:volt} give the mean of this probability distribution
as
\begin{align*}
 \sum_{m=2^{\lceil n/2\rceil}}^{2^n} mp_m
 =\xi_n^{-1}\OP{vol}(\TT^n) =
               \frac{(2\pi)^n}{(n+1)^{\frac{n+1}{2}}} \cdot
               \frac{(n+1)\Gamma((n+1)/2)}{\pi^{\frac{n+1}{2}}}
              =\frac{2^n\pi^{\frac{n-1}{2}}\Gamma((n+1)/2)}{(n+1)^{\frac{n-1}{2}}}.
\end{align*}
As observed by Alston \cite{Alston}, using Stirling's
approximation
\smash{$\Gamma(x)\approx
\sqrt\frac{2\pi}{x}\bigl(\frac{x}{{\rm e}}\bigr)^x$}, the mean is
approximately $2^{cn}$, where
$c=\frac{1}{2}\log_2(2\pi/{\rm e})\approx 0.604$.
\end{Remark}

\begin{Question} Can anything more be said about the
asymptotics of this distribution as $n\to \infty$?
\end{Question}

\begin{Question}[refined Oh conjecture] Let
$\phi(\TT^n)$ be a Clifford-type torus for
$\phi\in\OP{Ham}(\cp^n)$, and let $\mu$ be the Haar measure
on ${\rm PSU}(n+1)$. Let $q_{2k}$ be the probability with respect
to $\mu$ that~$\#(\phi(\TT^n)\cap g\rp^n)\geq 2k$. Is it
true that
$\sum_{k\geq \ell}p_{2k}\leq \sum_{k\geq \ell}q_{2k}$ for all $\ell$? Even short of proving this (which
would imply Oh's conjecture), any lower bound (independent of
$\phi$) on some specific sum $\sum_{k\geq \ell}q_{2k}$ would
give more information about how the surplusection guaranteed by
Goldman's bound is distributed.
\end{Question}

\begin{Remark} We note that Viterbo \cite[Proposition~3.13]{ViterboMetric} established a lower bound for the volume
of {\em any} Lagrangian submanifold $L\subset\cp^n$. The lower
bound is a constant times $\tilde{d}(L)^{n/2}$ where
$\tilde{d}(L)$ is the displacement energy of the preimage of
the Lagrangian in $S^{2n-1}\subset\CC^{n+1}$ under the
symplectic reduction $S^{2n-1}\to\cp^n$. This indicates that
surplusection is a ubiquitous phenomenon and should probably be
closely related to action filtrations, spectral invariants, and
persistence modules.
\end{Remark}

\section{Chekanov-type tori}

Having focused on the case of Clifford-type tori in all
dimensions, and how they intersect all images of $\rp^n$ under
K\"{a}hler isometries, we turn to a more specific problem: how
{\em Chekanov-type} tori in $\cp^2$ intersect a particular
{\em loop} of $\rp^2$s. In this case, a Chekanov-type torus
can be displaced from $\rp^2$, so {\em any} intersection is
surplus. We will be able to give better, in fact sharp, control
on the measure of the surplusection locus.

\begin{Definition}[clean loops] For each $t\in[0,1]$, let $L_t$ be
the Lagrangian submanifold
\[%\label{eq:clean_loop}
 L_t \coloneqq \bigl\{ \big[(a_1+{\rm i}a_2){\rm e}^{{\rm i}\pi t/3} : (a_1-{\rm i}a_2){\rm e}^{{\rm i}\pi t/3}:
          a_3{\rm e}^{-{\rm i}2\pi t/3}\big] \mid [a_1:a_2:a_3]
        \in\rp^2\bigr\}\subset\cp^2.
\]
As $t$ varies this defines for us a loop of $\rp^2$s coming
from a 1-parameter subgroup of ${\rm PU}(3)$. If
$t_1\neq t_2\mod 1$, then the intersection
$L_{t_1}\cap L_{t_2}$ consists of a single transverse
intersection at~${[0:0:1]}$ and a circle of clean intersection
$\{[a:b:0]\mid |a|=|b|\}$. We call this a {\em clean loop} of~$\rp^2$s.
\end{Definition}

Consider the moment map
\[
\mu\colon\ \cp^2\to\RR^2,\qquad \mu([z_1:z_2:z_3]) =
 \frac{1}{2\sum_{j=1}^3|z_j|^2}\bigl(|z_1|^2,|z_2|^2\bigr)
 \]
for the standard torus action. Let $\Delta\subset\RR^2$ be the
moment image and $\ell$ be the intersection of the diagonal
$\{(x,x)\mid x\in\RR\}$ with $\Delta$. Similarly to Chekanov
and Schlenk \cite{CS}, consider the singular hypersurface
$\mu^{-1}(\ell)$, which is preserved by the circle action
$\big[ {\rm e}^{{\rm i}\theta}z_1 : {\rm e}^{-{\rm i}\theta}z_2 :z_3 \big]$. The
symplectic reduction of $\mu^{-1}(\ell)$ by this circle action
is a singular sphere which we will call $S$; it has two
singular points $p$ and $q$ living over the ends of
$\ell$. Here, $p$ is over the endpoint where $\ell$ meets
the slanted edge of $\Delta$ and $q$ is over the endpoint
where $\ell$ hits the lower-left corner of $\Delta$. The
preimage of $p$ under reduction is a circle on which the
circle action has stabiliser $\ZZ_2$; the preimage of $q$ is
a fixed point of the circle action. The submanifold
$L_t\subset\mu^{-1}(\ell)$ projects to an arc
$\alpha_t\subset S$ connecting $p$ and $q$. See Figure
\ref{fig:rp2s}.

\figlet{Left: The moment image of $\cp^{2}$ and the diagonal
 arc $\ell$. Right: The symplectic reduction $S$ of
 $\mu^{-1}(\ell)$ and several of the arcs $\alpha_t$.}{fig:rp2s}{
 \draw (0,0) -- (4,0) -- (0,4) -- cycle;
 \draw (0,0) -- (2,2) node [midway,below right] {$\ell$};
 \begin{scope}[shift={(8,2)}]
  \draw (0,0) circle [radius = 2];
  \node (p) at (0,2) {$\bullet$};
  \node at (p.north) {$p$};
  \node (q) at (0,-2) {$\bullet$};
  \node at (q.south) {$q$};
  \foreach \x in {-60,30,80} {
   \draw[thick] (p.center) arc [radius = {2/sin(\x)},start angle={\x},end angle={-\x}];
  }
  \node at (2,-1) [right] {$S$};
 \end{scope}
}

\begin{Definitions}
  For each loop $\gamma\subset S\setminus \{p,q\}$, there is
a Lagrangian torus
$T_\gamma\subset \mu^{-1}(\ell) \subset \cp^{2}$ which
projects to $\gamma$ along the symplectic reduction. Let $P$
denote the component of $S\setminus\gamma$ containing
$p$. If $\OP{area}(P)=\frac{1}{3}\OP{area}(S)$, then
$T_\gamma$ is monotone. There are two isotopy classes of loop
$\gamma\subset S\setminus \{p,q\}$: loops of class A separate
$p$ from $q$; loops of class B do not. If $\gamma$ has
class~A, then $T_\gamma$ is a Clifford-type torus; if
$\gamma$ has class~B, then $T_\gamma$ is not
Clifford-type. Pick your favourite $\gamma$ of class $B$ and
call $T_\gamma$ the standard Chekanov torus $\chek$. A
{\em Chekanov-type torus} is any Lagrangian torus of the form
$\phi(\chek)$ for $\phi\in\OP{Ham}\big(\cp^2\big)$, and if
$\gamma$ is of class~B, then $T_\gamma$ is Chekanov-type. See
Figure~\ref{fig:clif_chek}.
\end{Definitions}

\figlet{A loop of class A (left) and class B (right), yielding
 respectively a monotone Clifford-type and a Chekanov-type torus in
 $\cp^{2}$.}{fig:clif_chek}{
 \draw (0,0) circle [radius = 2];
 \node (p) at (0,2) {$\bullet$};
 \node at (p.north) {$p$};
 \node (q) at (0,-2) {$\bullet$};
 \node at (q.south) {$q$};
 \draw (30:2) arc [radius = 2*cos(30)/sin(30),start angle=-60,end angle=-120];
 \draw[dotted] (30:2) arc [radius = 2*cos(30)/sin(30),start angle=60,end angle=120];
 \node at (0,0) {$2/3$};
 \node at (0,1.2) {$1/3$};
 \begin{scope}[shift={(6,0)}]
  \draw (0,0) circle [radius = 2];
  \node (p) at (0,2) {$\bullet$};
  \node at (p.north) {$p$};
  \node (q) at (0,-2) {$\bullet$};
  \node at (q.south) {$q$};
  \draw (80:2) to[out=-90,in=90] (-1.5,0) to[out=-90,in=90] (-80:2);
  \draw[dotted] (80:2) to[out=-90,in=90] (-1,0) to[out=-90,in=90] (-80:2);
  \node at (-1,1.3) {$1/3$};
  \node at (1,0) {$2/3$};
 \end{scope}}

\begin{Lemma}\label{lma:nearby_tori} For any
 neighbourhood $U$ of $L_t\cup L_{t+1/3}$, there is a
 monotone Chekanov-type torus~$\chek_U$ contained in $U$.
\end{Lemma}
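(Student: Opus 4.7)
The plan is to work in the symplectic reduction $S$ of $\mu^{-1}(\ell)$, where $L_t$ is the full reduction-preimage of the arc $\alpha_t$ (the $L_t$ being circle-invariant because $|z_1|^2 = |z_2|^2$ identically on $L_t$, placing $L_t \subset \mu^{-1}(\ell)$). Any neighbourhood of $L_t \cup L_{t+1/3}$ in $\cp^2$ contains the reduction-preimage of some neighbourhood of $\alpha_t \cup \alpha_{t+1/3}$ in $S$, so it suffices to build a smooth loop $\gamma \subset S \setminus \{p,q\}$ of class B whose $p$-side has area exactly $\tfrac{1}{3}\OP{area}(S)$, arbitrarily close to $\alpha_t \cup \alpha_{t+1/3}$; then $\chek_U := T_\gamma$ is the required monotone Chekanov-type torus inside $U$.

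First I would pin down the bigon areas. The element of the defining $1$-parameter subgroup of ${\rm PU}(3)$ at parameter $\tfrac{1}{3}$ commutes with the reducing circle action and descends to a Hamiltonian, hence area-preserving, diffeomorphism of $S$ that sends $\alpha_s$ to $\alpha_{s+1/3}$; it therefore cyclically permutes the three arcs $\alpha_t, \alpha_{t+1/3}, \alpha_{t+2/3}$. These three arcs partition $S$ into three cyclically permuted bigons, each necessarily of area $\tfrac{1}{3}\OP{area}(S)$. Let $B$ denote the bigon bounded by $\alpha_t$ and $\alpha_{t+1/3}$ that does not contain $\alpha_{t+2/3}$.

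Next I would construct $\gamma$ by smoothing the two corners of $\partial B$ at $p$ and $q$, pushing each smoothing arc to the side opposite the interior of $B$, so that $\gamma$ bounds $B$ together with two tiny crescents containing $p$ and $q$. Both $p$ and $q$ now lie inside $\gamma$, so they sit in the same component of $S \setminus \gamma$; hence $\gamma$ is of class B and $T_\gamma$ is Chekanov-type. The enclosed area is $\tfrac{1}{3}\OP{area}(S) + \varepsilon$ for an arbitrarily small $\varepsilon > 0$, and a small inward bump on $\gamma$ away from $p$ and $q$ subtracts exactly $\varepsilon$ of area to yield $\OP{area}(P) = \tfrac{1}{3}\OP{area}(S)$, hence monotonicity of $T_\gamma$. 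All these modifications can be performed inside any prescribed neighbourhood of $\alpha_t \cup \alpha_{t+1/3}$, so $T_\gamma \subset U$.

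The main subtlety I anticipate is the smoothing combinatorics: of the four corner-smoothing choices, ``both outward'' and ``both inward'' give class B while the mixed choices give class A, and only ``both outward'' places $p$ on the $\tfrac{1}{3}$-area side as needed for monotonicity. Once this choice is identified, the rest is soft: the area adjustment is a one-parameter perturbation, and containment in $U$ follows by shrinking the smoothings and the bump.
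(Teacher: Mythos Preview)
Your proof is correct and follows the same approach as the paper: build a class~B loop $\gamma$ in the reduced sphere $S$ hugging $\alpha_t\cup\alpha_{t+1/3}$ with $\OP{area}(P)=\tfrac{1}{3}\OP{area}(S)$, and take $\chek_U=T_\gamma$. The paper's own proof is essentially a one-line reference to a figure showing exactly the ``both outward'' smoothing you describe; you have supplied the details the paper leaves implicit, in particular the symmetry argument that the three bigons cut out by $\alpha_t,\alpha_{t+1/3},\alpha_{t+2/3}$ each have area $\tfrac{1}{3}\OP{area}(S)$, and the combinatorial analysis of the four corner-smoothing choices.
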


 \figlet{The arcs $\alpha_t$ and $\alpha_{t+1/3}$ and the
  loop $\gamma$, as viewed from the North pole (the dotted line
  indicates how it looks in the Southern hemisphere).}{fig:nearby_tori}{
  \draw (0,0) circle [radius = 2];
  \node (p) at (0,0) {$\bullet$};
  \node at (p.north) {$p$};
  \draw (-90:2) -- (0,0) node [midway,left] {$\alpha_t$};
  \draw (0,0) -- (30:2) node [pos=0.6,above=0.2cm] {$\alpha_{t+1/3}$};
  \draw (25:2) to[out=-155,in=-60] (30:0.5) arc [radius=0.5,start angle=30,end angle = 280] to[out=0,in=85] (-85:2);
  \draw[dotted] (25:2) to[out=-165,in=-60] (30:0.4) arc [radius=0.4,start angle=30,end angle = 280] to[out=0,in=75] (-85:2);
  \node at (1,-0.2) {$\gamma$};
 }

\begin{proof}
 Figure \ref{fig:nearby_tori} shows a type B loop $\gamma$
 chosen in such a way that
 $\OP{area}(P) = \frac{1}{3}\OP{area}(S)$. This can be chosen
 to lie arbitrarily close to $\alpha_t\cup\alpha_{t+1/3}$. If
 it is close enough, then $T_\gamma\subset U$ and we can take
 $\chek_U=T_\gamma$.
\end{proof}

The fact that the Chekanov torus arises as a clean surgery of
two $\rp^2$s in this way was observed by Abreu and Gadbled
\cite{AbreuGadbled}.

\begin{Corollary}\label{cor:intersection}If
 $K=\phi(\chek)$ is a Chekanov-type torus, then the
 intersection $K\cap (L_t\cup L_{t+1/3})$ is
 nonempty for any $t\in[0,1]$. Moreover, if $K$ intersects
 both $L_t$ and $L_{t+1/3}$ transversely and is disjoint
 from $L_t\cap L_{t+1/3}$, then the number of intersection
 points between $K$ and $L_t\cup L_{t+1/3}$ is at least
 four.
\end{Corollary}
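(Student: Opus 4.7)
The plan is to transport a Floer-theoretic lower bound on $\#(K\cap\chek_U)$ to a lower bound on $\#(K\cap(L_t\cup L_{t+1/3}))$, using Lemma~\ref{lma:nearby_tori} to place a monotone Chekanov-type torus $\chek_U$ in any chosen neighborhood of $L_t\cup L_{t+1/3}$. The Floer input I will use is that $K=\phi(\chek)$ and $\chek_U$ are both monotone Chekanov-type tori in $\cp^2$, hence Hamiltonian isotopic, so $HF(K,\chek_U)\cong HF(\chek,\chek)$; taking a local system $\mathcal{L}$ on $\chek$ associated to a critical point of its superpotential gives $HF(\chek,\chek;\mathcal{L})\cong H^{*}(\TT^{2};\mathbb{F}_2)$, of rank~$4$, so $\#(K\cap\chek_U)\geq 4$ whenever the intersection is transverse.

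For the nonemptiness claim I would argue by contradiction. If $K\cap(L_t\cup L_{t+1/3})=\emptyset$, then by compactness there is an open neighborhood $U$ of $L_t\cup L_{t+1/3}$ disjoint from $K$, and Lemma~\ref{lma:nearby_tori} produces a torus $\chek_U\subset U$ with $\chek_U\cap K=\emptyset$. This contradicts $HF(K,\chek_U)\neq 0$.

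For the count $\geq 4$, set $A=K\cap L_t$, $B=K\cap L_{t+1/3}$, and $N=|A|+|B|=\#(K\cap(L_t\cup L_{t+1/3}))$ (the union being disjoint by the hypothesis that $K$ avoids the clean locus). Using transversality and disjointness from the clean locus, I would shrink $U$ so that it avoids a fixed neighborhood of the clean intersection circle, and so that $K\cap U$ is a disjoint union of small balls $D_x$, one around each $x\in A\sqcup B$, on which $K$ meets a single sheet of $L_t\cup L_{t+1/3}$ transversely at $x$. Then I would invoke the lemma, choosing the auxiliary loop $\gamma$ (in its proof) so close to $\alpha_t\cup\alpha_{t+1/3}$ that $\chek_U=T_\gamma$ is $C^1$-close on each $D_x$ to the sheet of $L_t\cup L_{t+1/3}$ that $K$ meets at $x$. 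Stability of transverse intersections then yields exactly one intersection of $K$ with $\chek_U$ in each $D_x$, and $K\cap\chek_U\subseteq K\cap U=\bigsqcup_x D_x$ rules out any others, so $\#(K\cap\chek_U)=N$. Combined with the Floer bound, this gives $N\geq 4$.

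The hard part is the Floer-theoretic rank bound, which rests on the existence of a critical point of the superpotential of the monotone Chekanov torus in $\cp^2$ (so that the corresponding local system has $HF$ of rank~$4$). A secondary point requiring care is the matching-of-intersections step: one must shrink $U$ sufficiently and choose $\gamma$ close enough to $\alpha_t\cup\alpha_{t+1/3}$ while still enclosing the monotone area $\tfrac{1}{3}\OP{area}(S)$ on $S$, so that no spurious intersections of $K$ with $\chek_U$ can appear near the clean intersection circle.
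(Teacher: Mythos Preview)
Your argument is essentially the paper's own proof: both parts reduce to the Floer rank bound $i(K,\chek_U)\geq 4$ via Lemma~\ref{lma:nearby_tori}, and your detailed justification of the bijection between $K\cap(L_t\cup L_{t+1/3})$ and $K\cap\chek_U$ is exactly what the paper leaves as ``for some sufficiently small neighbourhood $U$''. One phrasing slip: $U$ is a neighbourhood of $L_t\cup L_{t+1/3}$ and so necessarily contains the clean locus $L_t\cap L_{t+1/3}$ (which, incidentally, is a circle \emph{together with} the isolated point $[0:0:1]$); what you want is that $K\cap U$ lies in the complement of a fixed neighbourhood of that locus, which follows from the hypothesis that $K$ is disjoint from it.
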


\begin{proof}
 If $K$ is disjoint from $L_t\cup L_{t+1/3}$, then it is
 disjoint from some neighbourhood $U$ of this union, and
 hence from the torus $\chek_U$ constructed in Lemma~\ref{lma:nearby_tori}. But since both $K$ and $\chek_U$
 are monotone Chekanov-type tori, the Floer cohomology group
 $HF(K,\chek_U)$ (with a suitable choice of local coefficient
 system) has rank four, so $i(K,\chek_U)\geq 4$, so this is
 impossible.

 If the transversality conditions from the corollary hold, then
 there is a bijection between intersection points
 $K\cap (L_t\cup L_{t+1/3})$ and the intersection
 points $K\cap \chek_U$ for some sufficiently small
 neighbourhood $U$. The lower bound on the number of
 intersection points then follows from the fact that
 $i(K,\chek_U)\geq 4$.
\end{proof}

\begin{Corollary} \label{cor:surplusection} Take
 $\LL = \{L_t\mid t\in[0,1]\}$ and let $K$ be a monotone
 Chekanov-type torus. The surplusection locus
 $\SS_\LL(K)\subset[0,1]$ has Lebesgue measure at least
 $2/3$.
\end{Corollary}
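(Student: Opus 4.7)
The plan is to combine three applications of Corollary~\ref{cor:intersection} at shifted parameter values with the fact that $K$ can be displaced from each $L_t$, and then average. Since the family $\{L_t\}$ is the orbit of $\rp^2$ under a $1$-parameter subgroup of ${\rm PU}(3)$, each $L_t$ is Hamiltonian isotopic to the standard $\rp^2$; as noted at the beginning of this section, Chekanov-type tori can be displaced from $\rp^2$, so $i(K, L_t) = 0$ for every $t \in [0,1]$. Consequently, $\SS_\LL(K)$ is precisely the set
\[A \coloneqq \{t \in [0,1] \mid K \cap L_t \neq \emptyset\}.\]
The family also closes up into a loop parametrised by $\RR/\ZZ$: a short computation with the defining formula, absorbing the common phase ${\rm e}^{{\rm i}\pi t/3}$ and using that $a_3$ and $-a_3$ give the same point of $\rp^2$, yields $L_{t+1} = L_t$.

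Next, apply Corollary~\ref{cor:intersection} at the three parameter values $t$, $t+\tfrac{1}{3}$, $t+\tfrac{2}{3}$. For every $t \in \RR/\ZZ$, this says that each of the three pairs
\[\{t, t+\tfrac{1}{3}\}, \qquad \{t+\tfrac{1}{3}, t+\tfrac{2}{3}\}, \qquad \{t+\tfrac{2}{3}, t\}\]
meets $A$. Interpreted as the edges of a triangle on the three shifted points, this forces the complement $\{t, t+\tfrac{1}{3}, t+\tfrac{2}{3}\} \setminus A$ to be an independent set in the triangle, hence of size at most one. Equivalently, at least two of these three translates of $t$ lie in $A$.

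It remains to integrate. Setting $f(t) \coloneqq 1_A(t) + 1_A\bigl(t+\tfrac{1}{3}\bigr) + 1_A\bigl(t+\tfrac{2}{3}\bigr)$, we have $f(t) \geq 2$ pointwise. By translation-invariance of Lebesgue measure on $\RR/\ZZ$,
\[3\mu(A) = \int_0^1 f(t)\,{\rm d}t \geq 2,\]
giving $\mu(\SS_\LL(K)) = \mu(A) \geq 2/3$. There is no real obstacle beyond Corollary~\ref{cor:intersection} itself: this is a short pigeonhole-plus-averaging argument, relying only on displaceability (so that every intersection is surplus) and on the periodicity $L_{t+1} = L_t$ (so that the three shifted applications of the corollary are legitimate).
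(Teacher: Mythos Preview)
Your proof is correct and follows essentially the same approach as the paper. The paper works with the complement $F=[0,1]\setminus\SS_\LL(K)$ and observes that $F$, $F+1/3$, $F-1/3$ are pairwise disjoint (so $\mu(F)\leq 1/3$), while you phrase the identical pigeonhole as ``at least two of $t,t+1/3,t+2/3$ lie in $A$'' and integrate the sum of indicators; these are the same argument in dual form, and you additionally make explicit the facts $i(K,L_t)=0$ and $L_{t+1}=L_t$ that the paper leaves implicit.
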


\begin{proof}
 Let $F=[0,1]\setminus \SS_\LL(K)$. If $t\in F$, then
 $t+1/3\mod 1$ and $t-1/3\mod 1$ belong to~$\SS_\LL(K)$
 by Corollary~\ref{cor:intersection}. This means that $F$,
 $F+1/3\mod 1$ and $F-1/3\mod 1$ are three disjoint subsets
 of $[0,1]$, each having the same measure, and hence
 the measure of each is at most $1/3$. Therefore, the measure
 of $\SS_\LL(K)$ is at least $2/3$.
\end{proof}

\begin{Remark}
  On a philosophical level, one may compare Corollary
  \ref{cor:intersection} with the recent works by
  Mak--Smith~\cite{MakSmith} and Polterovich--Shelukhin
  \cite{PolShel}, which exhibit Lagrangian submanifolds which do
  not exhibit rigidity when taken alone but do exhibit rigidity
  when several copies are taken together.
\end{Remark}

\begin{Remark}
  One can use Corollary \ref{cor:surplusection}, together with
standard formulas for integrating over Lie groups, to obtain
volume bounds for the Chekanov torus from Crofton's
formula,\footnote{The interested reader can look at version~1 of
 this paper on arXiv for the details of how this bound is
 computed.} however the resulting bound is only
$\approx 3.27$, which is significantly worse than Goldstein's
bound. Goldstein's bound still applies to the Chekanov torus,
and indeed any wide Lagrangian torus in $\cp^2$ (such as the
Vianna tori), because the rank of Floer cohomology is still
$4$.
\end{Remark}

\begin{Remark}
  Despite the fact that this argument gives a weaker volume
bound than Goldstein, it does give better control on the measure
of the surplusection locus. In fact, it is sharp: one can
(visibly, in the symplectic reduction) disjoin the Chekanov
torus from $\bigcup_{t\in(\epsilon,1/3-\epsilon)}L_t$ for any
$\epsilon$. One can obtain still more information about the
surplusection locus: Gathercole \cite[Theorem~1.2]{Gathercole}
shows that $\phi(\YY)$ cannot be disjoined from
$L_s\cup L_{s+t}$ for any $s\in[0,1]$ and any
$t\in[1/3,2/3]$. This means that if $\phi(\YY)$ misses (say)
$L_0$ then the surplusection locus really contains the whole
interval $[1/3,2/3]$ (not just the endpoints).
\end{Remark}

The argument we used is very {\em ad hoc}, and does not
easily generalise to tackle slight modifications of the
problem. For example, the following.

\begin{Question} Can one see similar phenomena in higher
dimensions?
\end{Question}

Similar arguments appear to break down in $\cp^4$
and above. If we take a loop of diagonal matrices in ${\rm U}(n+1)$,
where the first $k$ entries coincide and the last $m=n+1-k$
entries coincide, and apply this loop to $\rp^n$, then we get a
clean loop of Lagrangian $\rp^n$s which intersect pairwise
along a copy of $\rp^{k-1}$ and a copy of $\rp^{m-1}$. It
appears (though we have not checked) that the clean surgery is
the Oakley--Usher Lagrangian \cite{OakleyUsher}, which has
vanishing Floer cohomology (it is actually displaceable) and our
argument relies crucially on being able to use Floer cohomology
with the result of surgery to get lower bounds on intersections.

\begin{Question} Let $\{g_t\mid t\in[0,1]\}$ be a {\em
 generic} 1-parameter subgroup in ${\rm SU}(3)$ isomorphic to~${\rm U}(1)$, so that $\rp^2\cap g_t\rp^2$ consists of three
points if $t\neq 0,1$. Let
$\LL'=\bigl\{g_t\rp^2\mid t\in [0,1]\bigr\}$ and let $K=\phi(\chek)$
be a monotone Chekanov-type torus. Is it true that
$\SS_{\LL'}(K)$ has Lebesgue measure at least $2/3$? The
surgery of $\rp^2$ with $g_t\rp^2$ is now the non-orientable
(Maslov 1) Lagrangian constructed by Abreu and Gadbled
\cite{AbreuGadbled}.
\end{Question}

\begin{Question} Are we justified in harbouring an
expectation that Chekanov tori should have bigger volume
than Clifford tori? Or can one find Chekanov tori whose volumes
get arbitrarily close to the volume of any Clifford torus?
C. Evans\footnote{No relation.} \cite{CEvans} showed that, under
(volume-decreasing) Lagrangian mean curvature flow with
surgeries, any Chekanov torus contained in the hypersurface
$\mu^{-1}(\ell)$ satisfying a certain symmetry condition
eventually flows with surgeries to the Clifford torus. However,
this is a very special situation with lots of symmetry.
\end{Question}

\section{Concurrent normals}

\subsection{The original conjecture}

We now move to another, completely different situation in
which surplusection appears (and is conjectured to appear even
more). Consider a not-too-eccentric ellipse in the plane and
draw on all the lines which intersect it normally. Most points
lie on two such normals, but there is a~small star-shaped
(astroid) region in the middle of the ellipse through every
point of which four normals pass.

\begin{center}
 \begin{tikzpicture}
  \foreach \a in {2.5} {
   \foreach \b in {2} {
    \foreach \c in {5} {
     \draw (0,0) circle [x radius = \a,y radius = \b];
     \foreach \m in {31}{
      \foreach \n in {-7,-6,...,7} {
       \draw ({\a*cos(360*\n/\m+0.1)},{\b*sin(360*\n/\m+0.1)}) --++
       ({180+atan(\a*tan(360*\n/\m+0.1)/\b)}:\c);
      };
      \foreach \n in {8,9,...,23} {
       \draw ({\a*cos(360*\n/\m+0.1)},{\b*sin(360*\n/\m+0.1)}) --++
       ({atan(\a*tan(360*\n/\m+0.1)/\b)}:\c);
      };
     };
    };
   };
  };
 \end{tikzpicture}
\end{center}

As the ellipse becomes more eccentric, the astroid starts to
protrude out of the top and bottom; as the eccentricity goes to
zero, the astroid shrinks to a point. In higher dimensions, the
normals to ellipsoids \smash{$\sum \frac{x_i^2}{a_i^2}=1$} have a
similar behaviour: if the radii $a_i$ are generic then the
origin lies on $2n$ boundary normals (in the non-generic case
it lies on infinitely many normals). There is a conjecture that
any convex body has a caustic which is at least as complicated
as this.

\begin{Conjecture}[{\cite[Problem A3]{CFG}}] \label{conj:concurrent_normals} If $C$ is
 a convex body in $\RR^n$ $($with smooth boundary for the
 purposes of our discussion$)$, then there exists a point
 $q\in C$ such that $q$ lies on at least $2n$ inward
 normals to $\partial C$.
\end{Conjecture}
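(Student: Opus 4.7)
The plan is to recast Conjecture~\ref{conj:concurrent_normals} as a Lagrangian intersection problem of the very sort advertised in this paper. For $q\in\RR^n$ define $f_q\colon\partial C\to\RR$ by $f_q(p)=\tfrac12|p-q|^2$: critical points of $f_q$ are precisely the feet of normals from $q$, so $q$ lies on $k$ inward normals to $\partial C$ if and only if $f_q$ has $k$ critical points. Equivalently, let $\Lambda\subset T^*\RR^n$ be the Lagrangian swept out by the affine normal lines of $\partial C$; then the number of normals through $q$ equals $\#(\Lambda\cap T^*_q\RR^n)$, putting the problem squarely into the surplusection framework with $\LL=\{T^*_q\RR^n\}_{q\in C}$ and $K=\Lambda$.

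Morse theory on $\partial C\cong S^{n-1}$ gives a free bound of $2$ (a maximum and a minimum of $f_q$), but the target $2n$ is an entire order of magnitude larger. The key heuristic for why $2n$ is correct is that, whenever $f_q$ is invariant under a fixed-point-free involution of $\partial C$ (thinking of $\partial C$ as $S^{n-1}$ via its Gauss map), critical points pair up and descend to a Morse function on $\rp^{n-1}$; the $\mathbb{F}_2$ cup-length of $\rp^{n-1}$ is $n-1$, and Lusternik--Schnirelmann forces at least $n$ critical points, hence $2n$ on $\partial C$. This is precisely what occurs at $q=0$ for a centrally symmetric $C$, and also for the generic ellipsoid at its centre; the whole difficulty is to manufacture such equivariance in the absence of an ambient symmetry.

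For general $C$ the proposal is to analyse the space of normals
\[
Z=\{(q,p)\in \RR^n\times \partial C \mid p-q \perp T_p\partial C\},
\]
a smooth $n$-dimensional submanifold which projects to $\RR^n$ by $(q,p)\mapsto q$ (the projection whose fibre cardinality is the quantity to be bounded) and to $\partial C$ by $(q,p)\mapsto p$ (a real line bundle). The critical values of the first projection form the focal hypersurface of $\partial C$, which carves $C$ into chambers of constant fibre cardinality; the fibre count jumps by $\pm 2$ across each smooth stratum. One plan is to select $q^\star$ in a deepest chamber and to show, by a cup-length argument on the stratification of the focal locus, that at least $n-1$ positive crossings accumulate between $\partial C$ (where the count is $2$) and $q^\star$, yielding the required $2n$. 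A parallel route, more in keeping with this paper, is to construct a filtered or spectral invariant of the Lagrangian $\Lambda$, in the spirit of Viterbo's bound from the previous section, that lower-bounds $\max_q\#(\Lambda\cap T^*_q\RR^n)$ by $2n$.

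The principal obstacle, and the reason the conjecture remains open for $n\geq 4$, is that $2n$ strictly exceeds every Morse or Lusternik--Schnirelmann invariant intrinsic to $S^{n-1}$: the $\ZZ/2$-equivariance that gives the bound in the symmetric case is not a property of $\partial C$ but of a cleverly chosen $q$. Any full proof must therefore either locate such a $q$ by a convex-geometric minimax argument (the approach that succeeds in dimensions $2$ and $3$ but has resisted all generalisation), or exhibit a genuinely symplectic invariant of the Lagrangian of normals $\Lambda$ that is robust under Hamiltonian isotopy, at least $2n$ in general, and sharp for ellipsoids. The hardest step, I expect, is ensuring that the invariant is realised at a single fibre rather than only on average over $q\in C$.
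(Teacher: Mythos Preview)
The statement you are attempting to prove is stated in the paper as a \emph{conjecture}, not a theorem; the paper contains no proof of it. Indeed, the paper explicitly records that Conjecture~\ref{conj:concurrent_normals} is known only in dimensions $2$, $3$ (Heil) and $4$ (Pardon), and for centrally symmetric bodies (Kuiper), and then goes on to \emph{reformulate} it as a surplusection problem rather than to resolve it. So there is no ``paper's own proof'' to compare against.

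Your proposal is likewise not a proof: it is an outline of possible strategies (focal-locus chamber counting, a hypothetical filtered invariant of the normal Lagrangian) together with an honest acknowledgement that the crucial step---forcing $2n$ intersections at a \emph{single} fibre rather than on average---is exactly the missing idea. That is a fair summary of the state of the art, but it does not close the gap. Two smaller points: first, the paper records that the case $n=4$ \emph{is} settled (by Pardon), so your ``remains open for $n\geq 4$'' and ``succeeds in dimensions $2$ and $3$'' should read $n\geq 5$ and $2$, $3$, $4$ respectively. Second, the paper's own reformulation uses the support function to place the problem in $T^*S^{n-1}$ (with $L_{\partial C}=\operatorname{graph}(\mathrm{d}h_C)$ and $L_q=\operatorname{graph}(\mathrm{d}\langle q,-\rangle)$), whereas you work in $T^*\RR^n$ with the swept normal Lagrangian $\Lambda$; these are equivalent via Arnold's hodograph transformation, but the $T^*S^{n-1}$ picture has the advantage that both Lagrangians are Hamiltonian isotopic to the zero-section, which is the form in which the paper poses its ``multi-valued'' Question~\ref{pg:multivalued_cnc}.
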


\begin{Remark}
  This conjecture is known to hold in dimensions two (where it
is easy), three (where it was proved by Heil
\cite{Heil2,Heil3,Heil1}) and dimension four (where it was
proved by Pardon~\cite{Pardon}). It is also known when $C$ is
centrally symmetric, i.e., symmetric under the antipodal map,
which was proved by Kuiper \cite{Kuiper} as a corollary of
Ljusternik--Schnirelmann theory \cite{LS}. The proofs in these
cases translate it into a problem in Morse theory via the {\em
 support function}.
\end{Remark}

\begin{Definitions} The support function $h_C$ of a convex
body $C\subset\RR^n$ is the function on the sphere
$S^{n-1}=\{v\in\RR^n\mid |v|=1\}$ defined by setting
$h_C(v) = \sup \{\langle u, v\rangle\mid u\in C \}$.
If $h_C$ is continuously differentiable (for example, if
$C$ has smooth boundary) and $\nabla h_C$ denotes its
gradient with respect to the round metric, then we can recover
$\partial C$ as the image of the parametrisation
\[\varphi\colon\ S^{n-1}\to \RR^n,\qquad \varphi(v) =
 h_C(v)v+\nabla h_C(v).\] The inward normal vector to
$\partial C$ at $\varphi(v)$ is $-v$ and the boundary
normals passing through the origin correspond to critical points
of $h_C$. If we translate $C$ so that a point $q\in C$ is
the origin, then the support function becomes
$h_{C-q}(v)=h_C(v)-\langle q,v\rangle$. This gives us a family
of functions~$h_{C-q}$ parametrised by $q\in C$, which are
Morse for generic $q\in C$. Those which are Morse are
guaranteed to have at least $2$ critical points, but the
concurrent normals conjecture asserts that at least one of them
has $2n$ critical points.
\end{Definitions}

\subsection{Reformulation in terms of surplusection}

This problem can be viewed as a Lagrangian surplusection
problem in the cotangent bundle~$T^*S^{n-1}$. Namely, the
space of oriented straight lines in $\RR^n$ is naturally a
symplectic manifold: it is the symplectic reduction (at height
$1$) of $(T^*\RR^n,\sum {\rm d}p_i\wedge {\rm d}q_i)$ by the
$\RR$-action (cogeodesic flow) generated by the Hamiltonian
$\frac{1}{2}\sum p_i^2$. Indeed, it is symplectomorphic to
$T^*S^{n-1}$: intuitively, each oriented line has a given
direction in $S^{n-1}$ and there is a tangent hyperplane's
worth of oriented lines pointing in a given direction. More
formally, the unit cotangent bundle of $\RR^n$ is strictly
contactomorphic to the 1-jet bundle of $S^{n-1}$ via Arnold's
{\em hodograph transformation}~\cite[pp.~48--49]{ArnoldULS}, and
the Reeb flow by which we symplectically reduce is the
cogeodesic flow in one case and addition of a constant to the
1-jet in the other, so that in both cases symplectic reduction
yields $T^*S^{n-1}$.

Given a submanifold $\Sigma\subset\RR^n$, the space of
lines orthogonal to it form a Lagrangian
sub\-mani\-fold\footnote{See, for example, Arnold's survey on ray
 systems \cite{ArnoldRays}, in particular, Section~3.A, Example~4. To
 see why it is Lagrangian, if we think of $T^*S^{n-1}$ as a
 symplectic reduction of $T^*\RR^n$ via geodesic flow,
 $L_\Sigma$ is the reduction of the (Lagrangian) conormal
 bundle of $\Sigma$.} $L_\Sigma$ of $T^*S^{n-1}$. For
example, if $\Sigma$ is a point $q$ then $L_\Sigma$ is
$\OP{graph}({\rm d}(\langle q,-\rangle))$. If $\Sigma=\partial C$
is the (smooth) boundary of a convex body $C$ then
$L_\Sigma=\OP{graph}({\rm d}h_C)$. Therefore, we have the following
reformulation of the concurrent normals conjecture. Consider the
family of Lagrangian submanifolds
$L_q=\OP{graph}({\rm d}\langle q,-\rangle)\subset T^*S^{n-1}$, with
$q\in C$. Since $L_q$ and $L_{\partial C}$ are Hamiltonian
deformations of the zero-section in $T^*S^{n-1}$, the
geometric intersection number $i(L_q,L_{\partial C})$ is equal
to $2$. However, we expect to find surplusection in this
family.

\begin{Conjecture}[reformulation] Let $C\subset \RR^n$
 be a convex body with smooth boundary. There is a point
 $q_*\in C$ with
 $\# (L_{q_*}\cap L_{\partial C} )=2n$.
 \end{Conjecture}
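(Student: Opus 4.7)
The plan is to exploit the Morse-theoretic translation already implicit in the setup: intersection points of $L_q$ with $L_{\partial C}$ are in bijection with critical points on $S^{n-1}$ of the translated support function $h_{C-q}(v)=h_C(v)-\langle q,v\rangle$, because $L_q=\OP{graph}({\rm d}\langle q,-\rangle)$ and $L_{\partial C}=\OP{graph}({\rm d}h_C)$ meet exactly where ${\rm d}h_C(v)={\rm d}\langle q,v\rangle$. Thus the task is to locate a point $q_*\in C$ for which the smooth function $h_{C-q_*}$ admits at least $2n$ critical points on $S^{n-1}$. First I would assemble the parametric family into a single smooth function $F\colon C\times S^{n-1}\to \RR$, $F(q,v)=h_C(v)-\langle q,v\rangle$, observe that $F$ is affine in $q$, and study the universal critical set $\Sigma=\{(q,v)\mid \nabla_v F(q,v)=0\}$ together with its projection $\pi\colon \Sigma\to C$. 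The fibre of $\pi$ over $q$ is the critical set of $h_{C-q}$, so the conjecture says some fibre has cardinality $\geq 2n$.

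The natural strategy is to apply Morse or Lusternik--Schnirelmann theory fibrewise. When $C$ is centrally symmetric one has Kuiper's argument: $h_C$ is then even, descends to $\rp^{n-1}$, and $\OP{cat}(\rp^{n-1})=n$ forces $n$ antipodal pairs of critical points. To attack the general case I would try to mimic this by choosing $q_*$ cleverly so as to recover enough symmetry. One candidate is to take $q_*$ to be the critical point of a functional $\Phi\colon C\to\RR$ built out of $F$, e.g.\ $\Phi(q)=\int_{S^{n-1}}\psi(h_{C-q}(v))\,{\rm d}v$ for a suitable $\psi$, or the spectral invariant coming from the Lagrangian Floer persistence module of the pair $(L_q,L_{\partial C})$ in $T^*S^{n-1}$. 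At a critical $q_*$ the bifurcation theory of the family $h_{C-q}$ should force extra critical points, via the same mechanism by which Cerf diagrams force births of pairs.

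The main obstacle is of course that the statement \emph{is} the concurrent normals conjecture, open for $n\geq 5$ in the non-symmetric case, so any such plan must at some point do work beyond what Pardon's four-dimensional argument achieves. Concretely, the subtlety is that critical points of $h_{C-q}$ can disappear as $q$ varies (a pair collides in a birth--death singularity and vanishes), and one needs a topological obstruction forcing the count to stay $\geq 2n$ somewhere in $C$. I would look for such an obstruction on the symplectic side: the loop (or higher-dimensional family) of Lagrangians $\{L_q\}_{q\in\partial C}$ should represent a non-trivial class in the homotopy of the space of Lagrangians in $T^*S^{n-1}$ (detected, for instance, by its Maslov/Seidel-type invariant against $L_{\partial C}$), and this non-triviality should lower-bound the maximum fibrewise intersection count via persistence. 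Translating the combinatorics of the normal map of $\partial C$ into such a symplectic invariant, and extracting from it the number $2n$, is the genuinely hard step and the one I would expect to require new input beyond the techniques recalled in the paper.
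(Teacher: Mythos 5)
Your proposal correctly identifies the substance of the reformulation: $L_q=\OP{graph}({\rm d}\langle q,-\rangle)$ and $L_{\partial C}=\OP{graph}({\rm d}h_C)$ are both graphs of exact $1$-forms on $S^{n-1}$, so $L_q\cap L_{\partial C}$ is in bijection with the critical set of $h_{C-q}=h_C-\langle q,-\rangle$. That is exactly the translation the paper performs in the paragraphs preceding the statement, and you have it right. However, the statement is labelled a \emph{Conjecture} in the paper precisely because no proof is offered there: it is the concurrent normals conjecture rewritten in Lagrangian language, and as you yourself observe it remains open for $n\geq 5$ in the non-centrally-symmetric case. What you propose beyond the reformulation --- packaging the family into $F(q,v)=h_C(v)-\langle q,v\rangle$, studying the universal critical set and its projection to $C$, invoking Kuiper's ${\rm LS}$-category argument in the symmetric case, and then gesturing at spectral invariants, Floer persistence modules, or a Maslov/Seidel-type obstruction for the family $\{L_q\}$ --- is a research programme, not a proof. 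The genuine gap is the one you name yourself: there is no mechanism in your outline that prevents critical points from dying in birth--death pairs as $q$ varies, and no topological invariant of the family is actually exhibited that forces a fibre with $\geq 2n$ points. So there is nothing in the paper to check you against --- the paper does not claim a proof either --- and your contribution reduces to the (correct) observation that the two formulations are equivalent, plus speculation that the paper itself makes in its surrounding remarks.
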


\begin{Remark} It is not possible to bound the mean
surplusection from below, because the locus of surplusection can
be arbitrarily small (e.g., for a circle there is a single $q$
which lies on {\em all} normals and every other point lies on
precisely two). Note that the conjecture still seems to be
difficult even without the restriction $q\in C$; see for
example the discussion in {\cite[Section 5, Lemma~10]{Pardon}}.\looseness=-1
\end{Remark}

The concurrent normals conjecture is therefore another
motivation to study surplusection phenomena. Given that we know
it is true for $n=2,3,4$, we feel it is not unreasonable to
ask a~``multi-valued'' analogue\footnote{Following Arnold
 \cite{ArnoldTPTWP}, we think of non-graphical Lagrangians in
 cotangent bundles as differentials of multi-valued functions.}
of the concurrent normals conjecture; note that the condition
$q\in C$ no longer makes sense in this generality.

\begin{Question}\label{pg:multivalued_cnc}
Does the surplusection associated with the concurrent normals
conjecture continue to hold for Lagrangian spheres which are not
graphical? More precisely, if $L\subset T^*S^{n-1}$ is a
Lagrangian sphere which is Hamiltonian isotopic to the
zero-section, does there exist a $q\in\RR^n$ such that
$\# (L\cap L_q)\geq 2n$?
\end{Question}

\begin{Remark} The answer is clearly yes when $n=2$: for
graphical Lagrangians $\OP{graph}({\rm d}f)$, it follows from the
concurrent normals conjecture to a convex body whose support
function is $f$; for a non-graphical Lagrangian $L$, there
is a cotangent fibre $F$ which intersects $L$
non-transversely but with total multiplicity $1$. Now by
tilting $F$ slightly and approximating it by a suitable
Lagrangian $L_q$ as in Figure~\ref{fig:multivalued_cnc}, we
obtain four intersections.
\end{Remark}

For those wishing to learn about some failed attempts to
tackle the concurrent normals conjecture (to help you avoid the
same pitfalls), see the blog of the second author.\footnote{\url{https://jde27.uk/blog/concurrent_normals.html}.}

\figlet{Question \ref{pg:multivalued_cnc} has a positive answer for non-graphical Lagrangians when $n=2$. Here, $L$ is the non-graphical Lagrangian, $F$ is a cotangent fibre meeting it non-transversely, and $L_q$ is a circle from our family which approximates a tilt of $F$. The parts of $L_q$ which go very far out of the picture are drawn dashed and anything at the back of the cylinder is drawn dotted.}{fig:multivalued_cnc}{
 \draw (0,2) circle [x radius = 2, y radius=0.5];
 \draw (-2,-2) arc [x radius = 2, y radius=0.5, start angle=-180, end angle = 0];
 \draw[dotted] (-2,-2) arc [x radius = 2, y radius=0.5, start angle=180, end angle = 0];
 \draw (-2,-2) -- (-2,2);
 \draw (2,-2) -- (2,2);
 \draw (-2,-1) to[out=0,in=90] (0.1,0.3) to[out=-90,in=180] (2,0);
 \draw[dotted] (2,0) to[out=180,in=0] (1,0.3) to[out=180,in=0] (-2,-1);
 \draw[thick] (0.1,-2.499) -- (0.1,{2-0.499});
 \node at (0.1,-2.499) [below] {$F$};
 \node at (2,0) [right] {$L$};
 \draw (0.1,0.3) --++ (110:1.6) node (a) {};
 \draw (0.1,0.3) --++ (-70:3) node (b) {};
 \draw[dashed] (a.center) to[out=110,in=250] (-0.5,3) to[out=70,in=110] (0,2.5);
 \draw[dashed] (1.5,-2) to[out=290,in=-70] (b);
 \draw[dotted] (0,2.5) to[out=290,in=110] (1.5,-2);
 \node at (1.7,-2.5) [below] {$L_q$};
}

\section[Reformulation in T\^*G]{Reformulation in $\boldsymbol{T^*G}$}

In this final section, we will show that a large class of
surplusection problems can be recast in terms of a different
surplusection problem in a cotangent bundle.

\begin{Definitions}
  Suppose that $X$ admits a Hamiltonian action of a group
$G$ with equivariant moment map
$\mu\colon X\to \mathfrak{g}^*$. Then there is a Lagrangian
{\em moment correspondence} {\cite[p.\ 21]{Weinstein}}:
\[C_\mu\subset (T^*G)^-\times X^-\times X,\qquad
 \{(g,\mu(gx),x,gx)\mid g\in G, x\in X\}.\] Here, we identify
$T^*G$ with $G\times\mathfrak{g}^*$ by choosing a basis of
left-invariant 1-forms and write a superscript minus sign to
indicate that the sign of the symplectic form has been
reversed. Composition with this correspondence gives a way of
turning Lagrangian submanifolds in $X^-\times X$ into
Lagrangian submanifolds in $T^*G$. For example, a product
Lagrangian $K\times L$ turns into
\[C_\mu\circ(K\times L)=\bigl\{(g,\mu(gx))\mid g\in G,\, x\in K\cap
 g^{-1}L\bigr\}.
 \] Suppose that we are interested in studying
surplusection between $K$ and the family
$\{gL\mid g\in G\}$. Suppose moreover that $\mu$ is an
embedding, for example, if $X$ is a coadjoint orbit of
$G$. For example, if $X=\cp^n$, $G={\rm PU}(n+1)$, $L=\rp^n$,
then this is precisely the situation we studied before.\looseness=-1
\end{Definitions}

\begin{Lemma}\label{lma:reformulation} The composite
 Lagrangian $M\coloneqq C_\mu\circ (K\times L)\subset T^*G$
 is embedded and has the following property. The intersection
 $M\cap T^*_gG$ is in bijection with $K\cap g^{-1}L$.
\end{Lemma}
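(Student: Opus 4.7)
The plan is to parametrise $M$ directly and verify both claims from that parametrisation. Set
\[
N \coloneqq \{(g,x) \in G \times K : gx \in L\},
\]
and define $\Psi \colon N \to T^*G$ by $\Psi(g,x) = (g, \mu(gx))$. Unwinding the definition of the composition of correspondences, the image of $\Psi$ is precisely $M$, and the points of $N$ with a fixed first coordinate $g$ are exactly the points of $K \cap g^{-1}L$. Thus the fibre statement will reduce to injectivity of $\Psi$ on a single fixed-$g$ slice.

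For the bijection $M \cap T^*_g G \cong K \cap g^{-1}L$: the map $x \mapsto (g, \mu(gx))$ is tautologically surjective from $K\cap g^{-1}L$ onto $M\cap T^*_g G$. For injectivity, if $(g, \mu(gx_1)) = (g, \mu(gx_2))$ then $\mu(gx_1) = \mu(gx_2)$, and since $\mu$ is an embedding (in particular injective) this forces $gx_1 = gx_2$, hence $x_1 = x_2$.

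For embeddedness of $M$, the same argument gives global injectivity of $\Psi$: any collision must occur inside a single fibre, which has been handled. For the immersion property I would use that $\mu$ being an immersion forces the $G$-action to be transitive on $X$, since $\ker d\mu_x = (T_x(Gx))^\omega$ by the moment map equation $d\mu^\xi = \iota_{X_\xi}\omega$. Hence the evaluation $\mathrm{ev}\colon G\times K \to X$, $(g,x)\mapsto gx$, is a submersion, and $N = \mathrm{ev}^{-1}(L)$ is a smooth submanifold. A tangent vector $(\xi,v) \in T_{(g,x)}N$ with $d\Psi(\xi,v) = (\xi, d\mu_{gx}(\xi\cdot x + g\cdot v)) = 0$ must have $\xi = 0$ from the first coordinate, and then injectivity of $d\mu_{gx}$ gives $g\cdot v = 0$, hence $v = 0$. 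Together with properness (automatic when $K$ and $L$ are closed), this upgrades $\Psi$ from an injective immersion to an embedding, so $M = \Psi(N)$ is an embedded submanifold of $T^*G$.

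The main subtlety I expect is the dual role played by the hypothesis that $\mu$ is an embedding: it is doing the set-theoretic work of making the bijection in the second assertion, but it also drives the differential-geometric part, because $\mu$ being an immersion is what forces transitivity and hence transversality of the evaluation map that makes $N$ a smooth manifold in the first place. Once one disentangles these two roles, the rest is the one-line derivative computation above.
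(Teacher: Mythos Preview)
Your argument is correct and runs parallel to the paper's, but the packaging is different. The paper observes that $C_\mu$ is the graph of the fibred coisotropic $G\times X\subset T^*G$ (embedded via $\mu$), whose symplectic reduction is $X^-\times X$; then $M$ is simply the preimage of $K\times L$ under the reduction map $G\times X\to X\times X$, $(g,x)\mapsto(gx,x)$, hence embedded. You instead parametrise $M$ directly by $N$ and verify that $\Psi$ is an embedding by hand. Your set $N$ is precisely this preimage (up to the reparametrisation $(g,x)\leftrightarrow(g,gx)$ of $G\times X$), so the content is the same; what you add is the explicit justification---via $\ker d\mu_x=(T_x(Gx))^\omega$---that $\mu$ being an immersion forces transitivity and hence that the relevant map is a submersion, a point the paper leaves implicit in its appeal to the reduction framework.

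One simplification: the properness step is unnecessary and your phrasing ``automatic when $K$ and $L$ are closed'' is not quite right (you would want $K$ compact). Instead, note that $\Psi$ is the restriction to $N\subset G\times X$ of the composite
\[
G\times X\xrightarrow{(g,x)\mapsto(g,gx)} G\times X\xrightarrow{\mathrm{id}\times\mu} G\times\mathfrak{g}^*,
\]
which is a diffeomorphism followed by an embedding (since $\mu$ is one), hence already an embedding. This is closer to how the paper gets embeddedness in one line, and it avoids any compactness hypothesis.
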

\begin{proof}
 Note that $C_\mu$ is the graph of the fibred coisotropic
 submanifold $G\times X\subset G\times\mathfrak{g}^*=T^*G$. A
 fibred coisotropic admits a symplectic reduction whose fibres
 are the leaves of the characteristic foliation; in this case,
 the symplectic reduction is $X^-\times X$, so $M$ is
 simply the preimage of $K\times L$ under the projection
 $G\times X\to X\times X$, $(g,x)\mapsto (gx,x)$. This
 shows that $M$ is embedded.

 To see the intersection, note that since $\mu$ is an
 embedding, $M\cap T^*_gG$ is in bijection with
 \[I=\{(g,\mu(gx),x,gx)\mid x\in K,\, gx\in L\}\subset
  T_g^*G\times X \times X.\] This set $I$ is, in turn, in
 bijection with
 $\{x\in X\mid x\in K,\, gx\in L\}=K\cap g^{-1}L$.
\end{proof}

\begin{Remark}
  Let $\pi\colon T^*G\to G$ be the projection. The
surplusection locus for the original problem is then the locus
in $G$ where $\pi|_M$ is more than $i(K,L)$-to-one,
i.e., where $M$ is ``more folded than it needs to be'' with
respect to the fibration by cotangent fibres. If we think about
what could be responsible for this folding, note that $M$ is
vertically ``cramped'': it is forced to live in the compact set
$G\times X\subset T^*G$.
\end{Remark}

\begin{Remark}
  One can use this correspondence even if $\mu$ fails to be
an embedding, but then the conclusions of Lemma~\ref{lma:reformulation} fail. However, it could still be useful:
for example, if $i(K,L)=0$, then the projection of $M$ to the
zero-section $G$ still coincides with the surplusection locus.
\end{Remark}

\subsection*{Acknowledgements} This note grew out of many
conversations with many people, including Chris Evans, Ivan
Smith, John Pardon, Umut Varolgunes, Leonid Polterovich, Kai
Hugtenburg, Jo\'{e} Brendel, Felix Schlenk, Elliot Gathercole,
Nikolas Adaloglou and Matt Buck. We are particularly grateful to
Egor Shelukhin for pointing out the work of Viterbo
\cite{ViterboMetric}, which led us to the short paper
\cite{Goldstein03} of Goldstein, both of which give even more
evidence for the ubiquity of surplusection (whilst also
rendering our original volume bound obsolete). We also thank the
referees for their prompt and insightful comments. J.E. is
supported by EPSRC Grant EP/W015749/1. G.D.R. is supported by
the Knut and Alice Wallenberg Foundation under the grants KAW
2021.0191 and KAW 2021.0300, and by the Swedish Research Council
under the grant number 2020-04426.

\pdfbookmark[1]{References}{ref}
\LastPageEnding

\end{document}